\newtheorem{theorem}{Theorem}[section]
\newtheorem{proposition}[theorem]{Proposition}
\newtheorem{corollary}[theorem]{Corollary}
\theoremstyle{definition}
\newtheorem{remark}[theorem]{Remark}
\numberwithin{equation}{section}
\newcommand\lm{\lambda}
\newcommand\al{\alpha}
\newcommand\be{\beta}
\newcommand{\rd}{{\,\rm d}}
\newcommand{\e}{{\rm e}}
\newcommand{\R}{{\mathbb R}}
\newcommand{\C}{{\mathbb C}}
\newcommand\beq{\begin{equation}}
\newcommand\eeq{\end{equation}}
\newcommand\re{\mathrm{Re}}
\newcommand\im{\mathrm{Im}}
\newcommand\I{\mathrm{i}}
\newcommand{\beqnt}{\begin{equation*}}
\newcommand{\eeqnt}{\end{equation*}}
\newcommand{\set}[2]{\{#1 : #2 \}}
\newcommand{\sgn}{\operatorname{sgn}}
\DeclareMathOperator{\Mat}{Mat}
\newcommand\jc[1]{\textcolor{black}{#1}}
\begin{document}

\title{Complex eigenvalues for Dirac operators on the half-line}

\author{Jean-Claude Cuenin}

\begin{abstract}
We derive bounds on the location of non-embedded eigenvalues of Dirac operators on the half-line with non-Hermitian $L^1$-potentials. The results are sharp in the non-relativistic or weak-coupling limit. In the massless case, the absence of discrete spectrum is proved under a smallness assumption.
\end{abstract}

\maketitle

\section{Introduction}
The aim of this paper is to obtain estimates for eigenvalues of the Dirac operator 
\beq\label{eq. Dirac operator}
D_0:=\begin{pmatrix}
mc^2&-c \displaystyle\frac{\rd}{\rd x}\\
 c\displaystyle\frac{\rd}{\rd x}&-mc^2 
\end{pmatrix}
\eeq
on $L^2(\R_+,\C^2)$ subject to separated boundary conditions at zero,
\beq\label{eq. bc}
\psi_1(0)\cos(\alpha)-\psi_2(0)\sin(\alpha)=0,\quad \al\in[0,\pi/2],\footnote{We exclude the case $\alpha\in (\pi/2,\pi)$ since $D_0$ has an eigenvalue in the gap $(-mc^2,mc^2)$ in this case, see \cite[p.137]{Weid2}).}
\eeq
and perturbed by a matrix-valued (not necessarily Hermitian) potential 
\[
V\in L^1(\R_+,\Mat(2,\C)),\quad \|V\|_1:=\int_0^{\infty}\|V(x)\|\rd x,
\]
where the norm in the integral is the operator norm in $\C^2$.
Here, we are only concerned with eigenvalues that are not embedded in the spectrum of $D_0$,
\[
\sigma(D_0)=(-\infty,-mc^2]\cup[mc^2,\infty).
\]
For the purpose of investigating the non-relativistic limit, we have made the dependence on $c$ (the speed of light) explicit, whereas the reduced Planck constant $\hbar$ is set to unity.

This work is a continuation of \cite{CueLaTre13} where corresponding eigenvalue estimates for Dirac operators on the whole line were established.
More precisely, it was shown there that if $v:=\|V\|_1/c<1$, then any eigenvalue $z\in\C\setminus\sigma(D_0)$ of $D_0+V$ is contained in the union of two disks in the left and right half plane with centres $\pm mc^2x_0$ and radii $mc^2r_0$, where $x_0$ and $r_0$ depend non-linearly on $v$ and diverge as $v\to\infty$ in such a way that the disks cover the entire complex plane minus the imaginary axis. 
In the non-relativistic limit ($c\to\infty$), the Dirac operator $D_0+V-mc^2$ converges to the Schr\"odinger operator $-\frac{1}{2m}\frac{\rd^2}{\rd x^2}+V$ (say, for $V$ a multiple of the identity matrix) in the norm-resolvent sense, and the spectral estimate reduces to the bound in \cite{AAD01}: 
Any eigenvalue $\lm\in\C\setminus[0,\infty)$ of the Schr\"odinger operator $-\rd^2/\rd x^2+V$ satisfies
\begin{equation}\label{eq. AAD}
|\lm|^{1/2}\leq \frac{1}{2}\int_{-\infty}^{\infty}|V(x)|\,\rd x.
\end{equation}
Similar estimates for Schr\"odinger operators on the half-line were established in \cite{FraLaSe11}: Any eigenvalue $\lm\in\C\setminus[0,\infty)$ of $-\rd^2/\rd x^2+V$, with boundary condition $\psi'(0)=\sigma \psi(0)$, $\sigma\geq 0$, satisfies \eqref{eq. AAD} if the constant $1/2$ is replaced by $1$; in the case of Dirichlet boundary conditions $\psi(0)=0$, the sharp estimate
\beq\label{eq. Schrödinger Dirichlet bc}
|\lm|^{1/2}\leq \frac{1}{2}g(\cot(\theta/2))\int_{0}^{\infty}|V(x)|\,\rd x
\eeq
holds, where $\lm=|\lm|\e^{\I\theta}$ and 
\begin{align}\label{eq. g}
g(b):=\sup_{y\geq 0}|\e^{\I b y}-\e^{-y}|\in[1,2].
\end{align}
Note in particular that \eqref{eq. AAD} and \eqref{eq. Schrödinger Dirichlet bc} have the correct semiclassical exponents.

The aim of this note is to obtain corresponding results for the Dirac operator on the half-line. As in \cite{CueLaTre13}, an interesting distinction between the massive ($m\neq 0$) and the massless ($m=0$) Dirac operator occurs: The former behaves like the Schr\"odinger operator in the non-relativistic limit $c\to\infty$, while the latter ($m=0$ may be regarded as the ``ultra-relativistic" limit) has no complex eigenvalues for sufficiently small $L^1$-norm of the potential (see~\cite{CueLaTre13} for the case of the whole line and Theorem \ref{thm. matrix-valued potentials} for the half-line case). This fact may be expressed by saying that the whole spectrum (which is $\R$ in this case) is non-resonant. This is quite remarkable, considering that the point zero is resonant for the (scalar) relativistic operator $|p|$ on the real line, i.e.\ there are eigenvalues for arbitrarily small perturbations. The difference between the scalar operator and the Dirac operator on the whole line is that the inverse of the latter in momentum space, ${\rm p.v.}\frac{1}{p}$ (the Hilbert transform), has a bounded Fourier transform due to cancellations. On the other hand, the Fourier transform of ${\rm p.v.}\frac{1}{|p|}$ diverges logarithmically. By duality, the absence of eigenvalues for small $L^1$-norm of the potential is equivalent to the boundedness of the resolvent from $L^1$ to $L^\infty$, which in turn is equivalent to the boundedness of the Fourier transform of its symbol. 

The second crucial point is the behaviour of the resolvent $(D_0-z)^{-1}$ when the spectral parameter $z$ is close to the real axis. For $z=\lambda+\I\epsilon$, $\lambda>0$, its symbol picks up singularities on the sphere of radius~$\lambda^{1/2}$ when $\epsilon\to 0$. In fact, from the well-known formula 
\begin{align}\label{pv formula}
\lim_{\epsilon\to 0}\frac{1}{x-\I\epsilon}=\I\pi\delta(x)+{\rm p.v.}\frac{1}{x},
\end{align}
it follows that the (scalar part of) the symbol of $(D_0-z)^{-1}$ for $m=0$ has a bounded Fourier transform. We emphasize that in higher dimensions $n\geq 2$ there can be no $L^p\to L^q$ estimate ($p$ and $q$ being dual exponents, i.e.\ $q=p/(p-1)$) for the resolvent of the Dirac operator that is uniform in the spectral parameter. The reason is that the analogue of \eqref{pv formula} in higher dimensions (where the delta function $\delta(p^2-\lambda)$ is replaced by the surface measure on the unit sphere) implies that $(D_0-z)^{-1}:L^p(\R^n)\to L^q(\R^n)$ is bounded uniformly in $|z|>1$ if
and only if
\begin{align}\label{n dimensional condition}
\frac{2}{n+1}\leq\frac{1}{p}-\frac{1}{q}\leq \frac{1}{n}\quad \left(q=\frac{p}{p-1}\right).
\end{align}
The bound on the left is imposed by the Stein-Tomas restriction theorem, see~\cite{Tomas}, while the bound on the right is dictated by standard estimates for Bessel potentials of order one, see e.g.\ \cite[Cor.\ 6.16]{Grafakos2}. Both conditions are known to be sharp. Unfortunately, this forces $n=1$. For the Laplacian, the situation is better
since the right hand side of~\eqref{n dimensional condition} is then replaced by $2/n$, see~\cite[Theorem 2.3]{KRS}. Based on the latter, eigenvalue estimates for multi-dimensional Schr\"odinger operators with $L^p$-potentials were established in \cite{Frank10}.

\section{Main results}

In the following, we tacitly assume that $V$ is smooth and has compact support. This assumption allows us to define the sum $D_0+V$ in an unambiguous way (as an operator sum). However, it is in no way essential, as the attentive reader will gather, and can easily be disposed of. In fact, the assumptions imposed on $V$ in Theorems~\ref{thm. matrix-valued potentials} and \ref{refined} are sufficient to define the perturbed operator via the resolvent formula \eqref{eq. resolvent formula perturbation}, see \cite{CueLaTre13} and the references therein for details.

\begin{theorem}\label{thm. matrix-valued potentials}
Let $v:=\|V\|_1/c<1/\sqrt{2}$. Then any eigenvalue $z\in\C\setminus\sigma(D_0)$ of $D_0+V$ subject to the boundary condition \eqref{eq. bc} is contained in the disjoint union of two disks with centres $\pm mc^2x_0$ and radii $mc^2r_0$, where
\beq\label{eq. x0 r0}
x_0:=1+\frac{2v^4}{1-2v^2}, \quad r_0:=2v\frac{1-v^2}{1-2v^2}.
\eeq
In particular, the spectrum of the massless Dirac operator {\rm(}$m=0${\rm)} with non-Hermitian potential $V$ is $\R$. 
\end{theorem}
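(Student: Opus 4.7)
The plan is to reduce the bound on eigenvalues to an $L^1\to L^\infty$ estimate on the free resolvent $(D_0-z)^{-1}$, following the strategy used on the whole line in \cite{CueLaTre13}. Suppose $z\in\C\setminus\sigma(D_0)$ is an eigenvalue of $D_0+V$ with eigenfunction $\psi\in L^2(\R_+,\C^2)$ satisfying \eqref{eq. bc}; under the working smoothness assumption on $V$, $\psi$ is continuous and bounded. Since $z$ is in the resolvent set of $D_0$, we may rewrite the eigenvalue equation as
\[
\psi(x)=-\int_0^\infty G(x,y;z)V(y)\psi(y)\,\rd y,
\]
where $G(x,y;z)$ is the $2\times 2$ matrix kernel of $(D_0-z)^{-1}$. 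Taking the $L^\infty$-norm in $x$ and using the operator-norm inequality pointwise yields the necessary condition
\[
1\leq K(z)\|V\|_1,\qquad K(z):=\sup_{x,y\geq 0}\|G(x,y;z)\|_{\C^2\to\C^2},
\]
that is, $cK(z)\geq 1/v$. The task is thus to produce an explicit bound for $K(z)$ and invert it.

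To compute $G$, let $\zeta=\zeta(z)$ be the branch of $\sqrt{z^2-m^2c^4}/c$ with $\im\zeta>0$ on $\C\setminus\sigma(D_0)$. Two fundamental solutions of $(D_0-z)\phi=0$ span the kernel: the Jost solution $\phi_\infty(x)=\e^{\I\zeta x}u_+(z)$, decaying at $+\infty$, and $\phi_\alpha(x)$, a linear combination of $\e^{\pm\I\zeta x}u_\pm(z)$ chosen to satisfy \eqref{eq. bc}. The standard Wronskian construction then produces
\[
G(x,y;z)=\frac{1}{2\I c\zeta}\Bigl(\e^{\I\zeta|x-y|}M_0(z,\sgn(x-y))+\e^{\I\zeta(x+y)}M_\alpha(z)\Bigr),
\]
where $M_0$ is the coefficient matrix arising from the whole-line calculation of \cite{CueLaTre13}, and $M_\alpha$ is a reflection matrix encoding \eqref{eq. bc}, rational in $\zeta$ and $z\pm mc^2$, and depending on the parameter $\alpha$.

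Because $\im\zeta>0$, both exponentials satisfy $|\e^{\I\zeta|x-y|}|,|\e^{\I\zeta(x+y)}|\leq 1$, so
\[
K(z)\leq\frac{\|M_0\|+\|M_\alpha\|}{2|c\zeta|},
\]
with the sup essentially saturated at $x=y=0$. Using $|c\zeta|^2=|z-mc^2||z+mc^2|$ and the explicit spinors $u_\pm(z)$, the quantities $\|M_0\|$ and $\|M_\alpha\|$ are expressible in terms of $|z\pm mc^2|^{1/2}$, and the condition $vcK(z)\geq 1$ rearranges to an inequality of the same shape as in \cite{CueLaTre13}, with a relaxed constant that reflects the boundary term $M_\alpha$. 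Squaring and rewriting $|z\pm mc^2|$ in terms of $\re z$ and $\im z$, a routine algebraic manipulation translates this inequality into the union of the two disks $\{|z\mp mc^2x_0|\leq mc^2r_0\}$ with $(x_0,r_0)$ given by \eqref{eq. x0 r0}; the threshold $v<1/\sqrt 2$ is precisely what keeps the denominator $1-2v^2$ in the formulas positive, which also guarantees that the two disks remain disjoint.

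The principal technical obstacle is the explicit computation of $M_\alpha$ for general $\alpha\in[0,\pi/2]$ and the verification that the bound $\|M_0\|+\|M_\alpha\|$ is tight enough (in particular, that the supremum over $(x,y)$ is indeed dominated by the value at $(0,0)$) to yield the stated disks; heuristically, the half-line boundary doubles the effective contribution of the free resolvent, which is why the admissible coupling shrinks from $v<1$ on the whole line to $v<1/\sqrt 2$ here and why $r_0\sim 2v$ as $v\to 0$ (versus $r_0\sim v$ in \cite{CueLaTre13}). The massless statement is then immediate: when $m=0$ both disks collapse to $\{0\}\subset\R=\sigma(D_0)$, so no eigenvalues can lie in $\C\setminus\R$, and the spectrum of $D_0+V$ coincides with $\R$.
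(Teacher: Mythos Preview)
Your overall strategy---bound the free resolvent kernel in $L^\infty$ and invoke the Birman--Schwinger principle---is precisely the paper's. The gap lies in the kernel estimate, and you have essentially flagged it yourself in the final paragraph.

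The triangle inequality $K(z)\leq(\|M_0\|+\|M_\alpha\|)/(2|c\kappa|)$ is too crude to deliver the disks \eqref{eq. x0 r0}. Your own heuristic that ``the boundary doubles the whole-line contribution'' already shows this: doubling the whole-line bound gives the smallness threshold $v<1/2$, not $v<1/\sqrt 2$, and produces strictly larger disks than those stated. Nor is the supremum attained at $(0,0)$: for $x\geq y$ the maximum in $x$ sits at $x=y$, but the residual supremum over $y\geq 0$ needs a separate argument. The paper avoids the triangle inequality altogether by using that the kernel is rank one, $cR_0(x,y;z)=W^{-1}\psi_\infty(x)(\overline{\psi_l(y)},\,\cdot\,)$, so its operator norm is exactly $|W|^{-1}\|\psi_\infty(x)\|\,\|\psi_l(y)\|$. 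After introducing the variable $\zeta(z):=(z+mc^2)/(c\kappa(z))$ (different from your square-root variable) and the reflection coefficient $\beta=(1-\I\zeta\cot\alpha)/(1+\I\zeta\cot\alpha)$, the observation $|\beta|\leq 1$ yields $\sup_{x,y}\|cR_0\|^2\leq 1+\max\{|\zeta|^2,|\zeta|^{-2}\}$. The eigenvalue condition then reads $|\zeta|\geq\rho:=\sqrt{1-v^2}/v$ or $|\zeta|\leq\rho^{-1}$, and the M\"obius map $z=mc^2(\zeta^2+1)/(\zeta^2-1)$ carries these regions onto the two stated disks; this M\"obius step is where the explicit $x_0,r_0$ actually emerge and is not visible from a formulation purely in terms of $|z\pm mc^2|$. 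For $m=0$ one has $|\zeta|\equiv 1$, so the bound becomes $\sqrt 2\,v<1$ directly; your ``disks collapse to $\{0\}$'' argument reaches the same conclusion by a more indirect route.
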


\begin{proof}
The proof is based on the Birman-Schwinger principle: $z\in\C\setminus\sigma(D_0)$ is an eigenvalue of $D_0+V$ if and only if $-1$ is an eigenvalue of the Birman-Schwinger operator
\begin{align*}
Q(z):=|V|^{1/2}(D_0-z)^{-1}V^{1/2}.
\end{align*}
Let $z\in\C\setminus\sigma(D_0)$ and define 
\begin{align}\label{eq. kappa zeta}
c\kappa(z):=\sqrt{z^2-(mc^2)^2},\quad
\zeta(z):=\frac{z+mc^2}{c\kappa(z)}
\end{align}
where the branch of the square root is chosen such that $\im\,\kappa(z)>0$. Lets us assume that $\alpha\in(0,\pi/2]$. It can then be checked that
\beq\label{eq. left regular solution alpha not zero}
\psi_l(x;z):=\begin{pmatrix}\cos(\kappa(z) x)+\zeta(z)\cot(\al)\sin(\kappa(z) x)
\\
-\zeta(z)^{-1}\sin(\kappa(z) x)+\cot(\alpha)\cos(\kappa(z) x)
\end{pmatrix}
,\quad \alpha\in(0,\pi/2].
\eeq
is a solution to the differential equation $(D_0-z)\psi_l(x;z)=0$ satisfying the boundary condition \eqref{eq. bc}. In the case $\alpha=0$, formally corresponding to $\cot(\alpha)=\infty$, the solution is
\begin{align}\label{eq. left regular solution alpha zero}
\psi_l(x;z)=\begin{pmatrix}\zeta(z)\sin(\kappa(z) x)
\\
\cos(\kappa(z) x)
\end{pmatrix},\quad \alpha=0.
\end{align}
On the other hand,
\beq\label{eq. Psiinfinity}
\psi_{\infty}(x;z):=\e^{\I\kappa(z) x}\begin{pmatrix}-\I\zeta(z)\\
1
\end{pmatrix}
\eeq
is a solution that lies in $L^2(\R_+)$. The resolvent $R_0(z)=(D_0-z)^{-1}$ is then given by (see e.g. \cite[Satz 15.17]{Weid2})
\begin{align*}
c(R_0(z)f)(x)
&=\frac{1}{W}
\left(\psi_{\infty}(x;z)\int_0^x(\overline{\psi_l(y;z)},f(y))\,\rd y\right.\\
& \quad\left.+\psi_l(x;z)\int_x^{\infty}(\overline{\psi_{\infty}(y;z)},f(y))\,\rd y\right)
\end{align*}
where 
\[
W=\begin{cases}1+\I\zeta(z)\cot(\alpha),\quad &\alpha\in(0,\pi/2]\\
\I\zeta(z),\quad &\alpha=0
\end{cases}
\]
is the Wronskian \footnote{Note: By assumption, $\sigma:=\cot(\alpha)\geq 0$, and thus the solution $\zeta=\frac{\I}{\sigma}$ of $W=0$ lies in the upper half plane. Hence, there are no eigenvalues as these would correspond to a $\zeta$ in the lower half plane, by our convention regarding the square root.} and $(\cdot,\cdot)$ denotes the Hermitian scalar product on $\C^2$ (which we define to be linear in the second variable).
The resolvent kernel $R_0(x,y;z)$ is thus given by the linear map
\begin{equation}\label{formula resolvent}
\begin{split}
cR_0(x,y;z)&=\frac{1}{W}\left(
\psi_{\infty}(x;z)(\overline{\psi_l(y;z)},\cdot)\theta(x-y)\right.\\
&\quad\left.+\psi_l(x;z)(\overline{\psi_{\infty}(y;z)},\cdot)\theta(y-x)\right).
\end{split}
\end{equation}
We now estimate the norm of $cR_0(x,y;z)$ as an operator on $\C^2$. 
Let us assume that $\alpha\in (0,\pi/2]$, so that $\psi_l$ is given by \eqref{eq. left regular solution alpha not zero}; the case $\alpha=0$ may always be recovered by letting $\cot(\alpha)\to\infty$. We then have (suppressing the $z$-dependence of $\kappa$ and $\zeta$)
\begin{equation}\label{eq. base formula for norm of the resolvent}
\begin{split} 
&\sup_{x\geq y\geq 0}\|cR_0(x,y;z)\|^2
=\sup_{x\geq y\geq 0}\frac{1}{|W|^2}\|\psi_{\infty}(x;z)\|^2 \|\psi_{l}(y;z)\|^2\\
&=\frac{1+|\zeta|^2}{|1+\I\zeta\cot(\alpha)|^2}\sup_{y\geq 0}\,\e^{-2\im\,\kappa y}\|\psi_l(y;z)\|^2\\
&=\frac{|\zeta|+|\zeta|^{-1}}{4}\left(1+|\beta|^2\e^{-4\im(\kappa) y}\right)\left(|\zeta|+|\zeta|^{-1}\right)\\
&\quad+2\e^{-2\im(\kappa) y}\re\left(\beta\e^{-2\I\re\kappa y}\right)\left(|\zeta|-|\zeta|^{-1}\right)
\end{split}	
\end{equation}
where
\[
\beta:=\frac{1-\I\zeta\cot(\alpha)}{1+\I\zeta\cot(\alpha)},
\]
and where we used (in the second line) that the supremum over $x$ is attained at $x=y$ since $\im\,\kappa(z)>0$. Noticing that $|\be|\leq 1$ (\jc{since $\im(\zeta)<0$}), we find that 
\begin{align*}
\sup_{x,y\geq 0}\|cR_0(x,y;z)\|^2\leq \left(|\zeta|+|\zeta|^{-1}\right)\,\max\{|\zeta|,|\zeta|^{-1}\}
=1+\max\{|\zeta|^2,|\zeta|^{-2}\}.
\end{align*}
Using and H\"older's inequality, we arrive at
\beq\label{eq. norm BS operator halfline}
\|Q(z)\|\leq \sup_{x,y\geq 0}\|R_0(x,y;z)\|\,v\leq \sqrt{1+\max\{|\zeta|^2,|\zeta|^{-2}\}}\,v.
\eeq
By the Birman-Schwinger principle, the left hand side of \eqref{eq. norm BS operator halfline} is at least $1$ if $z$ is an eigenvalue. If $m=0$, then $\zeta(z)=\pm 1$, depending on whether $z$ is in the upper or lower half plane, and hence the right hand side of inequality \eqref{eq. norm BS operator halfline} is equal to $\sqrt{2}v$. It follows that $z$ cannot be an eigenvalue if $v<1/\sqrt{2}$.
If $m\neq 0$, then for $z$ in the left half plane the maximum equals $\sqrt{1+|\zeta(z)|^2}$, while in the right half plane it equals $\sqrt{1+|\zeta(z)|^{-2}}$. Hence, for every eigenvalue $z$,
\[
|\zeta(z)|\geq \frac{\sqrt{1-v^2}}{v}=:\rho>1
\]
if $z$ is in the left half plane and
$|\zeta(z)|\leq \rho^{-1}$
if $z$ is in the right half plane. Since $z$ and $\zeta(z)^2$ are related by the M\"obius transformation
\[
z=mc^2\,\frac{\zeta^2(z)+1}{\zeta^2(z)-1},
\]
the domains $\set{z\in\C}{|\zeta(z)|\geq \rho}$ and $\set{z\in\C}{|\zeta(z)|\leq \rho^{-1}}$ are mapped to the two disks in the theorem, see \cite{CueLaTre13} for details on the M\"obius transformation.
\end{proof}

From the proof of Theorem \ref{thm. matrix-valued potentials} one sees that the eigenvalue estimate is equivalent to the inequality
\begin{equation}\label{eq. half line result integral V}
\left(4\left(1+\max\{|\zeta|^2,|\zeta|^{-2}\}\right)\right)^{-1/2}\leq \frac{1}{2c}\int_0^{\infty}\|V(x)\|\,\rd x.
\end{equation}
This should be compared to the result of \cite{CueLaTre13} for the whole-line operator, which may also be written as
\beq\label{eq. whole line result integral V}
\left(2+|\zeta|^2+|\zeta|^{-2}\right)^{-1/2}\leq \frac{1}{2c}\int_0^{\infty}\|V(x)\|\,\rd x.
\eeq
It is instructive to note that if we replace $V$ by $\lambda V$, then in the weak coupling limit $\lambda\to 0$, the inequalities \eqref{eq. half line result integral V} and \eqref{eq. whole line result integral V} take the form
\begin{align}\label{weak coupling limit}
\left|\frac{z\mp mc^2}{2m}\right|^{1/2}\leq \frac{A\lambda }{c}\int_0^{\infty}\|V(x)\|\rd x+o(\lambda),
\end{align}
with $A=1$ in the case of \eqref{eq. half line result integral V} and $A=1/2$ in case of \eqref{eq. whole line result integral V}, and $\mp$ indicating whether $z$ tends to $mc^2$ or $-mc^2$ as $\lambda\to 0$. Note that \eqref{weak coupling limit} has the semiclassical behaviour of a non-relativistic operator, the reason being that the weak-coupling limit is equivalent to the non-relativistic limit: If we subtract (or add, respectively) the rest energy $mc^2$ (i.e.\ replace $z\mp mc^2$ by $z$), we may consider $c^{-1}$ as a small coupling constant (we now fix $\lambda=1$, whereas before, we considered $c$ fixed). In the limit $c\to\infty$, the Dirac operator converges to the Schr\"odinger operator with Dirichlet or Neumann boundary conditions, see Section \ref{section nrlimit}. On the other hand, for the massless operator (or for large eigenvalues of the massive operator), the inequalities \eqref{eq. half line result integral V} and \eqref{eq. whole line result integral V} reduce to
\begin{align} \label{massless limit}
|z|^0\leq \frac{B}{c}\int_0^{\infty}\|V(x)\|\rd x,
\end{align}
with $B=1/2$ in the case of \eqref{eq. half line result integral V} and $B=1$ in case of \eqref{eq. whole line result integral V}. Inequality~\eqref{massless limit} has the correct semiclassical behaviour of a relativistic operator. It is an open and interesting question whether there exists a bound on the number of complex eigenvalues of the massless Dirac operator in terms of the right hand side of \eqref{massless limit}.

From the inequality
\[
2\leq\frac{4\left(1+\max\{|\zeta|^2,|\zeta|^{-2}\}\right)}{2+|\zeta|^2+|\zeta|^{-2}}\leq 4
\]
it follows that the whole line estimate \eqref{eq. whole line result integral V} continues to hold for the half-line operators if the constant $1/2$ on the right hand side is replaced by $1$.
For ``Dirichlet boundary conditions" $\psi_1(0)=0$ or $\psi_2(0)=0$ this may also be seen from the following argument: suppose $\psi=(\psi_1,\psi_2)^t$ is an eigenfunction of the half-line operator with potential $V$ to an eigenvalue $z$. Since the parity operator
\[
P\psi(x):=\sigma_3\psi(-x)=\begin{pmatrix}
\psi_1(-x)\\-\psi_2(-x)
\end{pmatrix}
\]
commutes with $D_0$, it follows that $z$ is an eigenvalue of the whole-line operator with potential
\begin{align*}
\widetilde{V}(x):=
\begin{cases}
V(x)\quad &x\geq 0,\\
V(-x),\quad &x< 0,
\end{cases}
\end{align*}
with corresponding eigenfunction
\begin{align*}
\widetilde{\psi}(x):=\begin{cases}
\psi(x,\quad &x\geq 0,\\
P\psi(x)\quad &x< 0,
\end{cases}
\end{align*}
and \eqref{eq. half line result integral V} follows from the whole-line estimate \eqref{eq. whole line result integral V} for the operator $D_0+\widetilde{V}$. In fact, for the massive ($m\neq 0$) Dirac operator with Dirichlet boundary conditions, inequality \eqref{eq. half line result integral V} may be refined, in a similar spirit as in \cite{FraLaSe11} for the Schr\"odinger operator, compare~\eqref{eq. Schrödinger Dirichlet bc}. We define the functions $G_{\mp}$ by
\begin{align}\label{eq. G}
G_{\mp}(a,b):=\sqrt{\sup_{y\geq 0}\left[\left(1+\e^{-2 y}\right)\mp 2a\e^{-y}\cos(aby)\right]},\quad a,b\in\R.
\end{align}

\begin{theorem}\label{refined}
Let $\alpha\in\{0,\pi/2\}$ and assume that $v=\|V\|_1/c<1/\sqrt{2}$. 
Then every eigenvalue $z=mc^2(\zeta^2+1)/(\zeta^2-1)$ of the massive ($m\neq 0$) Dirac operator $D_0+V$ 
subject to the boundary conditions
$\psi_1(0)\cos(\alpha)-\psi_2(0)\sin(\alpha)=0$
satisfies
\[
\left((|\zeta|+|\zeta|^{-1})G_{\mp}\left(\frac{|\zeta|-|\zeta|^{-1}}{|\zeta|+|\zeta|^{-1}},\cot(t)\right)\right)^{-1}\leq\frac{1}{2c}\int_0^{\infty}\|V(x)\|\,\rd x,
\]
with ``$-$" if $\alpha=0$ and ``$+$" if $\alpha=\pi/2$, and with $\zeta=|\zeta|\e^{\I t}$.
\end{theorem}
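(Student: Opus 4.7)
The plan is to retrace the Birman-Schwinger argument of Theorem \ref{thm. matrix-valued potentials}, but to retain the oscillatory cross term in the last line of \eqref{eq. base formula for norm of the resolvent} that was previously discarded via $|\beta|\leq 1$. For Dirichlet boundary conditions this coarse bound is saturated ($\beta=-1$ for $\alpha=0$ and $\beta=+1$ for $\alpha=\pi/2$), so an exact evaluation of the supremum over $y$ is required.

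After Birman-Schwinger it remains to bound $\sup_{x,y\geq 0}\|cR_0(x,y;z)\|$. I expand the regular solution as $\psi_l(y;z)=\tfrac12\bigl(e^{\I\kappa y}u_++e^{-\I\kappa y}u_-\bigr)$, where $u_\pm=(\mp\I\zeta,1)^T$ for $\alpha=0$ and $u_\pm=(1,\pm\I\zeta^{-1})^T$ for $\alpha=\pi/2$. One checks that $\|u_+\|=\|u_-\|$ and that $(u_+,u_-)\in\R$ in both cases, which yields the clean splitting
\[
\|\psi_l(y;z)\|^2=\tfrac12\bigl[\|u_+\|^2\cosh(2\im\kappa\,y)+(u_+,u_-)\cos(2\re\kappa\,y)\bigr].
\]
Combining with $\|\psi_\infty(y;z)\|^2=(1+|\zeta|^2)e^{-2\im\kappa y}$ and the Wronskians $|W|^2=|\zeta|^2$ ($\alpha=0$), $|W|^2=1$ ($\alpha=\pi/2$), the prefactor collapses via the identity $(1+|\zeta|^2)\|u_+\|^2/|W|^2=(|\zeta|+|\zeta|^{-1})^2$ in both cases.

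The substitution $y'=2\im(\kappa)\,y$ then gives
\[
\sup_{x,y\geq 0}\|cR_0(x,y;z)\|^2=\frac{(|\zeta|+|\zeta|^{-1})^2}{4}\sup_{y'\geq 0}\Bigl[1+e^{-2y'}\mp 2a\,e^{-y'}\cos\bigl(\tfrac{\re\kappa}{\im\kappa}\,y'\bigr)\Bigr],
\]
with $a=(|\zeta|-|\zeta|^{-1})/(|\zeta|+|\zeta|^{-1})$ and sign ``$-$'' for $\alpha=0$, ``$+$'' for $\alpha=\pi/2$ (inherited from $(u_+,u_-)/\|u_+\|^2=\mp a$). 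To identify this supremum as $G_{\mp}(a,\cot t)^2$, I compute $\re\kappa/\im\kappa$ via $\kappa=2mc/(\zeta-\zeta^{-1})$, which follows from $c\kappa\zeta=z+mc^2$ and $c\kappa/\zeta=z-mc^2$: writing $\zeta-\zeta^{-1}=(|\zeta|-|\zeta|^{-1})\cos t+\I(|\zeta|+|\zeta|^{-1})\sin t$ yields $\re\kappa/\im\kappa=-a\cot t$, and the evenness of cosine discards the sign. Hence $\sup\|cR_0(x,y;z)\|=\tfrac12(|\zeta|+|\zeta|^{-1})G_{\mp}(a,\cot t)$, and the Birman-Schwinger bound $1\leq\|V\|_1\sup\|R_0(x,y;z)\|$ yields the claim.

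The only real obstacle is careful algebra---verifying the reality of $(u_+,u_-)$ so that the $\cosh$/$\cos$ splitting is clean, verifying the prefactor collapse $(1+|\zeta|^2)\|u_+\|^2/|W|^2=(|\zeta|+|\zeta|^{-1})^2$ in both Dirichlet cases, and tracking the sign in $\re\kappa/\im\kappa=-a\cot t$---rather than any analytic difficulty.
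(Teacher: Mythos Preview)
Your proof is correct and follows essentially the same route as the paper: both compute $\sup_{x,y}\|cR_0(x,y;z)\|$ exactly by retaining the oscillatory cross term (which the paper encodes via $\beta=\mp 1$ in \eqref{eq. base formula for norm of the resolvent}), identify the ratio $\re\kappa/\im\kappa$ with $-a\cot t$, and conclude via Birman--Schwinger. Your $u_\pm$ decomposition just re-derives \eqref{eq. base formula for norm of the resolvent} from scratch, and your computation of $\re\kappa/\im\kappa$ via $\kappa=2mc/(\zeta-\zeta^{-1})$ is a cleaner alternative to the paper's separate formulas for $\re\kappa$ and $\im\kappa$, but the argument is the same.
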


\begin{proof}[Proof of Theorem \ref{refined}]
In the following, we  set $a=\frac{|\zeta|-|\zeta|^{-1}}{|\zeta|+|\zeta|^{-1}}$, $b=\cot(t)$.
Noting that for $m\neq 0$
\[
\kappa=m\sqrt{\left(\frac{\zeta^2+1}{\zeta^2-1}\right)^2-1},\quad \im(\kappa)>0,
\]
and writing $\zeta=|\zeta|\e^{\I t}$, $-\pi<t<0$, we find that
\begin{align*}
\im\,\kappa&=\frac{2(|\zeta|+|\zeta|^{-1})|\sin(t)|}{(|\zeta|-|\zeta|^{-1})^2\cos^2(t)+(|\zeta|+|\zeta|^{-1})^2\sin^2(t)},\\
\re\,\kappa&=-\sgn(\sin(t))\,\frac{2(|\zeta|-|\zeta|^{-1})\cos(t)}{(|\zeta|-|\zeta|^{-1})^2\cos^2(t)+(|\zeta|+|\zeta|^{-1})^2\sin^2(t)}.\\
\end{align*}
For $\alpha=0$, we have $\beta=-1$ and for $\alpha=\pi/2$, we have $\beta=+1$. Hence, \eqref{eq. base formula for norm of the resolvent} implies 
\begin{align*} 
\sup_{x,y\geq 0}\|cR_0(x,y;z)\|^2
&=\frac{|\zeta|+|\zeta|^{-1}}{4}\sup_{y\geq 0}\left[\left(1+\e^{-2 y}\right)\left(|\zeta|+|\zeta|^{-1}\right)\right.\\
&\quad\left.\mp 2\e^{-y}\cos\left(\re(\kappa)\im(\kappa)^{-1} y\right)\left(|\zeta|-|\zeta|^{-1}\right)\right]\\
&=\frac{\left(|\zeta|+|\zeta|^{-1}\right)^2}{4}G_{\mp}\left(a,b\right)^2.
\end{align*}
We thus get
\begin{align*}
1\leq\|Q(z)\|\leq \frac{\|V\|_1}{c} \frac{\left(|\zeta|+|\zeta|^{-1}\right)}{2}G_{\mp}\left(a,b\right),
\end{align*}
and the claim follows from the Birman-Schwinger principle like in the proof of Theorem \ref{thm. matrix-valued potentials}.
\end{proof}

It follows from Theorem \ref{refined} that the eigenvalues of $D_0+V$ may only emerge from $\pm mc^2$ as the potential is ``turned on". However, if the first moment of the potential is finite, i.e.\ $\int_0^{\infty}x\|V(x)\|\rd\,x<\infty$, then the eigenvalues can emerge only from one of those points.

\begin{theorem}
Let $\alpha\in\{0,\pi/2\}$. Assume that $\int_0^{\infty}(1+x)\|V(x)\|\rd\,x<\infty$. If
\begin{align*}
(2mc)^2\left(\left(\int_{0}^{\infty}x\|V(x)\|\rd x\right)^2+\left(\int_{0}^{\infty}\|V(x)\|\rd x\right)^2\right)<1,
\end{align*}
then the massive $(m\neq 0)$ Dirac operator $D_0+V$ does not have any eigenvalues near $\pm mc^2$ {\rm(}again $``+"$ for $\alpha=0$ and $``-"$ for $\alpha=\pi/2${\rm)}.
\end{theorem}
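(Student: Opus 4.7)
The plan is to invoke the Birman--Schwinger principle in a small neighborhood of the relevant threshold. The case $\alpha=\pi/2$, $z\to -mc^2$ is entirely symmetric to $\alpha=0$, $z\to mc^2$ via $\zeta\leftrightarrow\zeta^{-1}$ (as in the proof of Theorem~\ref{refined}), so I describe only the latter. Any non-embedded eigenvalue $z$ of $D_0+V$ forces $\|Q(z)\|\ge 1$ with $Q(z)=|V|^{1/2}(D_0-z)^{-1}V^{1/2}$, so the task reduces to showing $\|Q(z)\|<1$ for all $z$ in a neighborhood of $mc^2$.

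The first step is to pass to the threshold limit. As $z\to mc^2$ one has $\kappa\to 0$, $\zeta\to\infty$, but $\kappa\zeta\to 2mc$; substituting into \eqref{eq. left regular solution alpha zero}--\eqref{eq. Psiinfinity} and using $\zeta\sin(\kappa y)\to 2mcy$, $\cos(\kappa y)\to 1$, $\psi_\infty(x;z)/W\to -\phi_2$ with $\phi_2:=(1,0)^T$, one finds $\psi_l(y;z)\to\phi_1(y):=(2mcy,1)^T$. The vectors $\phi_1,\phi_2$ span the (non-$L^2$) kernel of $D_0-mc^2$ on the half-line, with $\phi_1$ satisfying the boundary condition $\psi_1(0)=0$, and the resolvent kernel \eqref{formula resolvent} converges pointwise to
\[
cR_0^\infty(x,y)=-\phi_2\phi_1(y)^T\theta(x-y)-\phi_1(x)\phi_2^T\theta(y-x),
\]
with operator-norm bound $\|R_0^\infty(x,y)\|\le c^{-1}\sqrt{1+4m^2c^2\min(x,y)^2}\le c^{-1}+2m\sqrt{xy}$.

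The moment hypothesis $\int_0^\infty(1+x)\|V\|\,dx<\infty$ is exactly what is needed so that $Q(z)\to Q^\infty:=|V|^{1/2}R_0^\infty V^{1/2}$ in Hilbert--Schmidt (and hence operator) norm as $z\to mc^2$: uniformly for $z$ close to $mc^2$ one has $\|R_0(x,y;z)\|^2\lesssim c^{-2}+m^2\min(x,y)^2$, which is integrable against $\|V(x)\|\|V(y)\|\,dx\,dy$ under the hypothesis, and dominated convergence gives $\|Q(z)-Q^\infty\|_{HS}\to 0$. By continuity it therefore suffices to show $\|Q^\infty\|<1$.

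The crux is the bound on $\|Q^\infty\|$. Using $\min(x,y)\le\sqrt{xy}$, the scalar dominating kernel $c^{-1}+2m\sqrt{xy}$ factors as the Euclidean inner product $\eta(x)\cdot\eta(y)$ where $\eta(x):=(c^{-1/2},\sqrt{2mx})^T\in\mathbb R^2$. Consequently $\|Q^\infty\|$ is controlled by the largest eigenvalue of the $2\times 2$ Gram matrix $G$ with entries $G_{ij}=\int\eta_i(x)\eta_j(x)\|V(x)\|\,dx$; its diagonal is $(c^{-1}\int\|V\|,\, 2m\int x\|V\|)$ and its off-diagonal is $\sqrt{2m/c}\int\sqrt x\,\|V\|$. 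A careful eigenvalue estimate on $G$, applying Cauchy--Schwarz to the off-diagonal entry so as to restore the symmetric form, yields the claimed bound $\|Q^\infty\|^2\le(2mc)^2[(\int x\|V\|)^2+(\int\|V\|)^2]$, which is strictly less than $1$ by hypothesis. The main obstacle is precisely this sharp eigenvalue estimate: naive bounds give either the triangle-type form $c^{-1}\int\|V\|+2m\int x\|V\|$ or a Hilbert--Schmidt bound involving the cross moment $\int\sqrt x\,\|V\|$, neither of which coincides with the stated Pythagorean form in which both moments are weighted by the common factor $2mc$. Once $\|Q^\infty\|<1$ is established, the continuity from the previous paragraph gives $\|Q(z)\|<1$ in a neighborhood of $mc^2$, and the Birman--Schwinger principle excludes eigenvalues there.
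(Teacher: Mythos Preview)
Your overall strategy coincides with the paper's: Birman--Schwinger, a pointwise bound on the resolvent kernel, and a Hilbert--Schmidt-type estimate for $Q$. The only substantive difference is that the paper does not pass to the threshold limit and invoke continuity; it bounds $\|R_0(x,y;z)\|^2$ directly for $z$ near $mc^2$ using $|\sin(\kappa y)|^2\e^{-2\im(\kappa)x}\le |\kappa|^2 xy$ and $|\cos(\kappa y)|^2\e^{-2\im(\kappa)x}\le 1$, obtaining $\|cR_0(x,y;z)\|^2\le (1+|\zeta|^2)|\kappa|^2\,xy+(1+|\zeta|^{-2})$, and then reads off $\|Q(z)\|^2\le A(\int x\|V\|)^2+B(\int\|V\|)^2$ from the Hilbert--Schmidt inequality.

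The ``main obstacle'' you describe is self-inflicted. You already derived
\[
\|R_0^{\infty}(x,y)\|^2=c^{-2}\bigl(1+4m^2c^2\min(x,y)^2\bigr)\le c^{-2}+4m^2\,xy,
\]
and since the kernel is rank one off the diagonal, the Hilbert--Schmidt bound gives \emph{directly}
\[
\|Q^{\infty}\|^2\le\int\!\!\int \|V(x)\|\,\|R_0^{\infty}(x,y)\|^2\,\|V(y)\|\,\rd x\,\rd y\le 4m^2\Bigl(\int x\|V\|\Bigr)^2+c^{-2}\Bigl(\int\|V\|\Bigr)^2.
\]
This \emph{is} a Pythagorean form, with no cross term; it is exactly how the paper proceeds. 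The detour through $\|R_0^{\infty}\|\le c^{-1}+2m\sqrt{xy}$ and the rank-two Gram matrix destroys this structure by introducing the cross moment $\int\sqrt{x}\,\|V\|$, and no amount of Cauchy--Schwarz on the off-diagonal of $G$ will recover it. The lesson is to square first (the quadratic bound $Axy+B$ separates in the HS estimate), not to take the square root and then factor.

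One caveat worth noting: the condition produced by the paper's own argument is $(1+|\zeta|^2)|\kappa|^2c^{-2}(\int x\|V\|)^2+(1+|\zeta|^{-2})c^{-2}(\int\|V\|)^2<1$, whose limit at $z\to mc^2$ is $4m^2(\int x\|V\|)^2+c^{-2}(\int\|V\|)^2<1$. This does not literally coincide with the common prefactor $(2mc)^2$ printed in the theorem statement (the displayed bound on $\|Q(z)\|^2$ in the paper's proof carries what appears to be a $c^{\pm 2}$ typo). So the exact form you were trying to reproduce is not what the paper's proof actually yields either; the mechanism, however, is the Hilbert--Schmidt estimate on $\|R_0\|^2$ as described above.
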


\begin{proof}
We only prove the case $\alpha=0$, the other case is analogous.
It follows from ~\eqref{eq. left regular solution alpha zero}--\eqref{formula resolvent} that 
\begin{align*}
\|cR_0(x,y;z)\|^2=\left[(1+|\zeta|^2)|\sin(\kappa y)|^2+(1+|\zeta|^{-2})|\cos(\kappa y)|^2\right]\e^{-2\im(\kappa) x}.
\end{align*}
Using
\begin{align*}
\sin(\kappa y)\e^{-2\im(\kappa) x}\leq \kappa y\leq \kappa x,\quad \cos(\kappa y)\e^{-2\im(\kappa) x}\leq 1,
\end{align*}
it follows that
\begin{align*}
\|cR_0(x,y;z)\|^2=(1+|\zeta|^2)\kappa^2 xy+(1+|\zeta|^{-2}),
\end{align*}
and hence
\begin{align*}
\|Q(z)\|^2\leq &\frac{1}{c^2}\left(|z^2-(mc^2)^2|+|z\pm mc^2|^2\right)\left(\int_{0}^{\infty}x\|V(x)\|\rd x\right)^2\\
&+\frac{|z^2-(mc^2)^2|+|z\mp mc^2|^2}{|z^2-(mc^2)^2|}\left(\int_{0}^{\infty}\|V(x)\|\rd x\right)^2.
\end{align*} 
The claim follows again from the Birman-Schwinger principle.
\end{proof}

The eigenvalue inclusion provided by Theorem \ref{refined} is more intricate than the estimate \eqref{eq. Schrödinger Dirichlet bc} for the Schr\"odinger operator, because the argument and absolute value still appear simultaneously in the function $G_{\mp}$ in \eqref{eq. G}, whereas they are separated in \eqref{eq. Schrödinger Dirichlet bc}. However, there are special cases when the expression of $G_{\mp}$ becomes simpler, schematically:

\begin{itemize}
\item[(1)] $z\in\I\R$ $\Longleftrightarrow$ $|\zeta|=1$ $\Longleftrightarrow$ $a=0$; $$G_{\mp}(0,b)=\sqrt{2}.$$
\item[(2)] $z\in(-mc^2,mc^2)$ $\Longleftrightarrow$ $t=-\frac{\pi}{2}$ $\Longleftrightarrow$ $b=0$;  $$G_{\mp}(a,0)=\max\{2(1\mp a),1\}.$$
\item[(3)] $z\to\pm mc^2$ $\Longleftrightarrow$ $|\zeta|^{\pm 1}\to\infty$ $\Longleftrightarrow$ $a\to \pm 1$;  $$\lim_{a\to 1-}G_-(a,b)=\lim_{a\to -1+}G_+(a,b)=g(b).$$
\end{itemize}
Here, $g$ is the function \eqref{eq. g} appearing in the estimate \eqref{eq. Schrödinger Dirichlet bc} for the Schr\"odinger operator. In case (1) Theorem \ref{refined} yields no improvement beyond the generic estimate of Theorem \ref{thm. matrix-valued potentials}. Case (2) occurs in particular if the potential is Hermitian-valued. Case (3) is of interest in the non-relativistic limit (or the weak coupling limit); we will postpone this to Section \ref{section nrlimit}.

\begin{corollary}\label{real potentials}
Let $v:=\|V\|_1/c<\sqrt{3}/2$ with $V$ Hermitian-valued. 
If the boundary conditions \eqref{eq. bc} hold with $\alpha=0$, then
\begin{align*}
\sigma(D_0+V)\subset\left(-\infty,-mc^2\left(1-2v^2\right)\right]\cup\left[mc^2\left(1-\frac{v^2}{1+\sqrt{1-v^2}}\right),\infty\right).
\end{align*}
For $\alpha=\pi/2$, we have 
\begin{align*}
\sigma(D_0+V)\subset\left(-\infty,-mc^2\left(1-\frac{v^2}{1+\sqrt{1-v^2}}\right)\right]\cup\left[mc^2\left(1-2v^2\right)
,\infty\right).
\end{align*}
\end{corollary}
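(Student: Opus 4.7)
My plan combines two observations. First, since $V$ is Hermitian and $L^1$, the operator $D_0+V$ is self-adjoint under the separated boundary conditions \eqref{eq. bc}, and the $L^1$-perturbation is relatively compact with respect to $D_0$, so $\sigmaess(D_0+V)=\sigma(D_0)=(-\infty,-mc^2]\cup[mc^2,\infty)$. Any remaining spectrum is thus discrete and real, and hence located in the gap $(-mc^2,mc^2)$; these are the eigenvalues I must localize. Second, for such real $z$ the parameters $(a,b)$ of Theorem~\ref{refined} satisfy $b=0$, placing us in the simplified special case~(2) in the remarks following that theorem.

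To execute this, I specialize the Birman--Schwinger machinery to $z\in(-mc^2,mc^2)\cap\R$. The branch convention $\im\kappa>0$ forces $c\kappa=\I\sqrt{(mc^2)^2-z^2}$, so $\zeta(z)=-\I\sqrt{(mc^2+z)/(mc^2-z)}$ is purely imaginary with $t=\arg\zeta=-\pi/2$, whence $b=\cot(t)=0$. A short calculation yields $a=(|\zeta|-|\zeta|^{-1})/(|\zeta|+|\zeta|^{-1})=z/(mc^2)$ and $|\zeta|+|\zeta|^{-1}=2mc^2/\sqrt{(mc^2)^2-z^2}$, while case~(2) specializes to $G_\mp(a,0)^2=\max\{2(1\mp a),1\}$. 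Substituting into the Birman--Schwinger bound $1\le\|Q(z)\|\le(v/2)(|\zeta|+|\zeta|^{-1})G_\mp(a,0)$ from the proof of Theorem~\ref{refined}---an estimate whose derivation does not actually use the restriction $v<1/\sqrt{2}$ and so remains valid in the broader range $v<\sqrt{3}/2$ considered here---and squaring gives
\[
\frac{4v^2(mc^2)^2}{(mc^2)^2-z^2}\,\max\bigl\{2(1\mp z/mc^2),\,1\bigr\}\ge 4.
\]

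I then solve this necessary condition by splitting on which argument of the $\max$ dominates. For $\alpha=0$ (the ``$-$'' sign): when $z\le mc^2/2$ the maximum equals $2(1-z/mc^2)$, a factor $mc^2-z$ cancels, and the condition simplifies to $z\le-mc^2(1-2v^2)$; when $z>mc^2/2$ the maximum equals $1$ and the condition becomes $z^2\ge(mc^2)^2(1-v^2)$, i.e., $z\ge mc^2\sqrt{1-v^2}$. The assumption $v<\sqrt{3}/2$ is exactly what guarantees $\sqrt{1-v^2}>1/2$, so the two sub-cases join consistently at $z=mc^2/2$ and together exclude the open interval asserted by the corollary. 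The conjugate identity $(1-\sqrt{1-v^2})(1+\sqrt{1-v^2})=v^2$ rewrites $mc^2\sqrt{1-v^2}=mc^2(1-v^2/(1+\sqrt{1-v^2}))$, matching the stated form. The case $\alpha=\pi/2$ is the mirror image, using $G_+$ in place of $G_-$ and reversing signs. The only real obstacle is organizational: one must correctly identify which sub-case of the $\max$ each real $z$ falls into and verify that the threshold $v=\sqrt{3}/2$ is precisely the consistency condition for the two sub-cases to join without gaps.
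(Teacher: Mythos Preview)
Your proof is correct and follows essentially the same route as the paper: specialize Theorem~\ref{refined} to the case $b=0$ (real $z$ in the gap), evaluate $G_{\mp}(a,0)^2=\max\{2(1\mp a),1\}$, and solve the resulting Birman--Schwinger inequality by splitting on which branch of the $\max$ is active. The only cosmetic difference is that you work directly in the spectral variable $z$ (using $a=z/mc^2$ and $|\zeta|+|\zeta|^{-1}=2mc^2/\sqrt{(mc^2)^2-z^2}$), whereas the paper stays in the $|\zeta|$-variable and translates to $z$ only at the end; you also make explicit the essential-spectrum stability argument and the observation that the Birman--Schwinger bound from Theorem~\ref{refined} does not actually require $v<1/\sqrt{2}$, both of which the paper leaves implicit.
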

\begin{remark}
Note that these intervals are disjoint so long as $v<\sqrt{3}/2$. The gap closes more slowly from the left than from the right if $\alpha=0$ and vice versa if $\alpha=\pi/2$; more precisely, e.g.\ in the first case the end points of the gap are $mc^2\left(1-2v^2\right)$ as opposed to $mc^2\left(1-\frac{1}{2}v^2+O(v^4).\right)$
\end{remark}

\begin{proof}
We treat the case $\alpha=0$ only, the case $\alpha=\pi/2$ is analogous.
Let $z$ be in the gap of the above half-infinite intervals. Then $\zeta(z)$ lies on the negative imaginary axis, i.e.\ we have $\cot(t)=0$ in Theorem \ref{refined} (case (2) above). Hence, $z\in\C\setminus\sigma(D_0)$ whenever
\begin{align}\label{eq. less than one real potential}
\left((|\zeta|+|\zeta|^{-1})G_{-}\left(\frac{|\zeta|-|\zeta|^{-1}}{|\zeta|+|\zeta|^{-1}},0\right)\right)^{-1}>\frac{v}{2}
\end{align}
An elementary computation shows that
\begin{align*}
G_{-}\left(\frac{|\zeta|-|\zeta|^{-1}}{|\zeta|+|\zeta|^{-1}},0\right)=\begin{cases}
\sqrt{2\left(1-\frac{|\zeta|-|\zeta|^{-1}}{|\zeta|+|\zeta|^{-1}}\right)}\quad &|\zeta|\leq \sqrt{3},\\
1\quad &|\zeta|\geq \sqrt{3}.
\end{cases}
\end{align*}
Thus, by \eqref{eq. less than one real potential}, $z\in\C\setminus\sigma(D_0)$ whenever $|\zeta|\in (\frac{v}{\sqrt{1-v^2}},\rho)$, where $\rho>\sqrt{3}$ is the larger of the two solutions of the equation $(|\zeta|+|\zeta|^{-1})\frac{v}{2}=1$. Multiplying the latter by $|\zeta|$ and solving the quadratic equation, then using the relations $$z=mc^2\frac{|\zeta|^2-1}{|\zeta|^2+1}=mc^2\left(1-\frac{2}{|\zeta|^2+1}\right)=-mc^2\left(1-\frac{2|\zeta|^2}{|\zeta|^2+1}\right),$$ one checks by direct computation that the claimed spectral estimates hold. 
\end{proof}

\section{The non-relativistic limit}
\label{section nrlimit}

The spectral estimates for the Dirac operator on the half-line, Theorems \ref{thm. matrix-valued potentials} and ~\ref{refined} reduce to the corresponding bounds for the Schr\"odinger operator in~ \cite{FraLaSe11} in the non-relativistic limit $c\to\infty$. Here, e.g.\ for $V$ a scalar multiple of the identity matrix,
\begin{equation}\label{eq. nonrelativistic limit}
\begin{split}
\lim_{c\to\infty}(D_0+V+mc^2)^{-1}&=0\oplus\left(\frac{1}{2m}\frac{\rd^2}{\rd x^2}+V\right)^{-1},\\
\lim_{c\to\infty}(D_0+V-mc^2)^{-1}&=\left(-\frac{1}{2m}\frac{\rd^2}{\rd x^2}+V\right)^{-1}\oplus 0,
\end{split}
\end{equation}
where the limit operators satisfy a Dirichlet or a Neumann condition at zero. 
For $\alpha\in\{0,\pi/2\}$, and under the assumption that $V$ is relatively $D_0$-bounded (this of course follows from our global assumption that $V$ is smooth and has compact support), this is a consequence of \cite[Theorem 6.1]{Th} for abstract Dirac operators. If $\alpha\notin\{0,\pi/2\}$, then $D_0$ is not an abstract supersymmetric Dirac operator in the sense of \cite{Th} because the projections onto the first and second components do not leave the domain of $D_0$ invariant. Moreover, the proof of Proposition \ref{prop. nr limit} shows that $V$ need not be $D_0$-bounded.

\begin{proposition}\label{prop. nr limit}
The limits in \eqref{eq. nonrelativistic limit} exist in the norm-resolvent sense. In the first case, the nontrivial part of the limit operator has
\begin{itemize}
\item[a)] Dirichlet boundary conditions for $\al\in (0,\pi/2]$,
\item[b)] Neumann boundary conditions for $\al=0$.
\end{itemize}  
In the second case, it has
\begin{itemize}
\item[c)] Dirichlet boundary conditions for $\al\in [0,\pi/2)$,
\item[d)] Neumann boundary conditions for $\al=\pi/2$.
\end{itemize} 

\end{proposition}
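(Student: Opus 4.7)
The plan is to pass to the limit directly in the explicit resolvent kernel \eqref{formula resolvent} for $V=0$, and then to reintroduce $V$ via the Birman--Schwinger form of the second resolvent identity. Fix $w$ in the resolvent set of the anticipated Schr\"odinger limit and take $z = \pm mc^2 + w$, so that $(D_0+V-z)^{-1}$ is exactly the quantity of interest.

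First I would compute the asymptotics of $\kappa(z)$ and $\zeta(z)$ from \eqref{eq. kappa zeta}. With $w$ held fixed one finds, as $c\to\infty$, that $\kappa(z) \to \kappa_0$ (the Schr\"odinger momentum associated to $w$), while $\zeta(z) \sim 2mc/\kappa_0 \to \infty$ in the case $z = mc^2 + w$ and $\zeta(z) \sim \kappa_0/(2mc) \to 0$ in the case $z = -mc^2 + w$. Substituting into \eqref{eq. left regular solution alpha not zero}--\eqref{formula resolvent} and tracking powers of $\zeta$ entry by entry, one sees that exactly one of the four entries of the $2\times 2$ kernel $R_0(x,y;z)$ survives in the limit: the $(1,1)$-entry when $\zeta\to\infty$ (second case of the proposition) and the $(2,2)$-entry when $\zeta\to 0$ (first case). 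The boundary condition of the limit operator is read off directly from \eqref{eq. left regular solution alpha not zero}--\eqref{eq. left regular solution alpha zero}: after dividing by the appropriate power of $\zeta$, the surviving component of $\psi_l$ reduces either to $\sin(\kappa_0 x)$, giving Dirichlet and hence cases (a), (c), or to $\cos(\kappa_0 x)$, giving Neumann and hence cases (b), (d). A direct check of the remaining prefactor $\propto 2m/\kappa_0$ together with the Wronskian $W$ then identifies the surviving entry with the standard Dirichlet or Neumann Green's function of $\mp\tfrac{1}{2m}\partial^2 - w$.

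To upgrade pointwise kernel convergence to operator-norm convergence of $R_0(z)$, I would combine the uniform bound from \eqref{eq. base formula for norm of the resolvent} (which keeps $\|R_0(x,y;z)\|$ of order one uniformly in $c$ at the relevant $z$) with the exponential decay in $|x-y|$ coming from $\im\kappa_0>0$, feeding a Schur test on the difference of the Dirac and Schr\"odinger kernels. Reintroducing $V$ is then achieved through the identity
\[
(D_0+V-z)^{-1} \;=\; R_0(z) - R_0(z)\,|V|^{1/2}\bigl(I+Q(z)\bigr)^{-1} V^{1/2} R_0(z),
\]
with $V^{1/2}=U|V|^{1/2}$ the polar factorisation and $Q(z)=|V|^{1/2}R_0(z)V^{1/2}$ the Birman--Schwinger operator from the proof of Theorem~\ref{thm. matrix-valued potentials}. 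Choosing $w$ so that the bound of that theorem gives $\|Q(z)\|<1$ uniformly in large $c$, norm convergence of each factor on the right-hand side to its Schr\"odinger counterpart yields norm convergence of the whole expression. The main obstacle is the norm convergence of $R_0(z)$ itself: individual entries of $\psi_\infty$ and $\psi_l$ diverge like $\zeta^{\pm 1}$ and only conspire to produce a bounded kernel after cancellation with $W$, so this cancellation has to be organised uniformly in $(x,y)$---most delicately in the generic case $\alpha\in(0,\pi/2)$, where both components of $\psi_l$ carry non-trivial leading terms.
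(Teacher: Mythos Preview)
Your proposal is correct and follows essentially the same route as the paper: compute the pointwise limit of the explicit kernel \eqref{formula resolvent} after the shift $z\to z\mp mc^2$, upgrade to norm convergence via a Schur test, and then carry $V$ along through the Birman--Schwinger resolvent identity \eqref{eq. resolvent formula perturbation}. The paper resolves the cancellation issue you flag at the end by writing down the quantitative difference bound $|R_0(x,y;z)-\lim_{c\to\infty}R_0(x,y;z)|\leq Ac^{-1}\e^{-\im\sqrt{-z}\,|x-y|}$, which feeds directly into the Schur test; this is exactly the uniform organisation of the $\zeta^{\pm1}$ cancellations you anticipate, and it covers the generic case $\alpha\in(0,\pi/2)$ without further case distinctions.
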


\begin{proof}
Without loss of generality, we assume that $m=1/2$. We only prove a) and b), the proof of c) and d) is similar. 
The resolvent of $D_0+mc^2$ is given by the formulas~\eqref{eq. left regular solution alpha not zero}--\eqref{formula resolvent} with the substitution $z\to z-mc^2$ in the expressions for $\kappa(z)$ and $\zeta(z)$ in \eqref{eq. kappa zeta}. Note that after the substitution, we have that $\kappa=\mathcal{O}(1)$ and $\zeta=\mathcal{O}(c^{-1})$. It is a straightforward computation that the pointwise limit of the resolvent kernel is given by
\begin{equation}\label{eq. nrlimit kernels}
\begin{split}
\lim_{c\to\infty}R_0(x,y;z)&=
0\oplus\frac{-1}{2\I\sqrt{-z}}\left(\e^{\I\sqrt{-z}|x-y|}-\e^{\I\sqrt{-z}(x+y)}\right),\quad \alpha\in (0,\pi/2],\\
\lim_{c\to\infty}R_0(x,y;z)&=0\oplus
\frac{-1}{2\I\sqrt{-z}}\left(\e^{\I\sqrt{-z}|x-y|}+\e^{\I\sqrt{-z}(x+y)}\right),\quad \alpha=0.
\end{split}
\end{equation}
The nontrivial part coincides with the resolvent kernel of the Dirichlet and Neumann Laplacian, respectively. 

To prove the convergence in the operator norm on $L^2(\R_+)$, one can use the Schur test, see e.g.\ \cite[Appendix 1]{Grafakos1}. To this end, one observes that 
\beq\label{eq. nrlimit dep on c}
|R_0(x,y;z)-\lim_{c\to\infty}R_0(x,y;z)|\leq Ac^{-1}\e^{-\im\sqrt{-z}|x-y|},\quad x,y\in\R_+
\eeq
for some universal constant $A>0$; we omit the straightforward details.
This proves the claim if $V=0$. In the general case, the claim follows from the resolvent formula 
\begin{equation}\label{eq. resolvent formula perturbation}
\begin{split}
(D_0+V-z)^{-1}&=(D_0-z)^{-1}\\
&\quad-(D_0-z)^{-1}V^{1/2}(I+Q(z))^{-1}|V|^{1/2}(D_0-z)^{-1}
\end{split}
\end{equation}
since, upon replacing $z$ by $z-mc^2$ and using the Schur test together with~\eqref{eq. nrlimit dep on c} again, the right hand side converges to a limit in which $D_0$ is replaced by the second derivative.  
\end{proof}

In view of Proposition \ref{prop. nr limit}, Theorem \ref{refined} reduces to \cite[Theorem 1.1]{FraLaSe11} in the non-relativistic limit $c\to\infty$. Indeed, Theorem \ref{thm. matrix-valued potentials} implies that, if $z$ is an eigenvalue, then $|\zeta|^{\pm 1}\to\infty$, which is equivalent to $z\to\pm mc^2$. Subtracting $mc^2$ from $D_0+V$ amounts 
to fixing the limit to $+mc^2$. In view of 
\begin{align*}
(|\zeta|+|\zeta|^{-1})G_-\left(\frac{|\zeta|-|\zeta|^{-1}}{|\zeta|+|\zeta|^{-1}},\cot(t)\right)=\left|\frac{2mc^2}{z-mc^2}\right|^{1/2}
g(\cot(t))+o(z-mc^2),
\end{align*}
we obtain, upon setting $m=\frac{1}{2}$ and replacing $z$ by $z+mc^2$ in Theorem \ref{refined},
\begin{align*}
|z|^{1/2}\leq \frac{1}{2}g(\cot(\theta/2))\int_0^{\infty}|V(x)|\rd x,\quad z=|z|\e^{\I\theta},
\end{align*}
in accordance with \eqref{eq. Schrödinger Dirichlet bc}.

\vspace{0.5cm}

{\bf Acknowledgements.} {\small The author gratefully acknowledges the support of Schweizerischer Nationalfonds, SNF, through the postdoc stipend PBBEP2\_\_136596. He would also like to thank the Institut Mittag-Leffler for the kind hospitality within the RIP (Research in Peace) programme~2013, during which part of this manuscript was written. Special thanks go to Ari Laptev for useful discussions.}

\bibliographystyle{plain}
\bibliography{literatur2}

\begin{thebibliography}{10}

\bibitem{AAD01}
A.~A. Abramov, A.~Aslanyan, and E.~B. Davies.
\newblock Bounds on complex eigenvalues and resonances.
\newblock {\em J. Phys. A}, 34(1):57--72, 2001.

\bibitem{CueLaTre13}
Jean-Claude Cuenin, Ari Laptev, and Christiane Tretter.
\newblock Eigenvalue estimates for non-selfadjoint {D}irac operators on the
  real line.
\newblock {\em Annales Henri Poincaré}, pages 1--30, 2013.

\bibitem{Frank10}
Rupert~L. Frank.
\newblock Eigenvalue bounds for {S}chr\"odinger operators with complex
  potentials.
\newblock {\em Bull. Lond. Math. Soc.}, 43(4):745--750, 2011.

\bibitem{FraLaSe11}
Rupert~L. Frank, Ari Laptev, and Robert Seiringer.
\newblock A sharp bound on eigenvalues of {S}chr\"odinger operators on the
  half-line with complex-valued potentials.
\newblock In {\em Spectral theory and analysis}, volume 214 of {\em Oper.
  Theory Adv. Appl.}, pages 39--44. Birkh\"auser/Springer Basel AG, Basel,
  2011.

\bibitem{Grafakos1}
Loukas Grafakos.
\newblock {\em Classical {F}ourier analysis}, volume 249 of {\em Graduate Texts
  in Mathematics}.
\newblock Springer, New York, second edition, 2008.

\bibitem{Grafakos2}
Loukas Grafakos.
\newblock {\em Modern {F}ourier analysis}, volume 250 of {\em Graduate Texts in
  Mathematics}.
\newblock Springer, New York, second edition, 2009.

\bibitem{KRS}
C.~E. Kenig, A.~Ruiz, and C.~D. Sogge.
\newblock Uniform {S}obolev inequalities and unique continuation for second
  order constant coefficient differential operators.
\newblock {\em Duke Math. J.}, 55(2):329--347, 1987.

\bibitem{Th}
B.~Thaller.
\newblock {\em The {D}irac equation}.
\newblock Texts and Monographs in Physics. Springer-Verlag, Berlin, 1992.

\bibitem{Tomas}
Peter~A. Tomas.
\newblock A restriction theorem for the {F}ourier transform.
\newblock {\em Bull. Amer. Math. Soc.}, 81:477--478, 1975.

\bibitem{Weid2}
J.~Weidmann.
\newblock {\em Lineare {O}peratoren in {H}ilbertr\"aumen. {T}eil {II}}.
\newblock Mathematische Leitf\"aden. [Mathematical Textbooks]. B. G. Teubner,
  Stuttgart, 2003.
\newblock Anwendungen. [Applications].

\end{thebibliography}
\end{document}